\documentclass[12pt,reqno]{amsart}
\usepackage[headings]{fullpage}
\usepackage{amssymb,amsmath,mathtools,bbm,tikz,tikz-cd,color,mathrsfs}
\usepackage[bookmarks=true,%
    colorlinks=true,%
    linkcolor=blue,%
    citecolor=blue,%
    filecolor=blue,%
    menucolor=blue,%
    urlcolor=blue,%
    breaklinks=true]{hyperref}

\newtheorem{theorem}{Theorem}[section]
\theoremstyle{definition}
\newtheorem{proposition}[theorem]{Proposition}
\newtheorem{lemma}[theorem]{Lemma}

\newtheorem{corollary}[theorem]{Corollary}

\def\a{\alpha}
\def\b{\beta}

\def\th{\theta}

\def\BZ{\mathbbm Z}
\def\BQ{\mathbbm Q}

\def\BC{\mathbbm C}

\def\id{\mathrm{id}}
\def\End{\mathrm{End}}
\def\Hom{\mathrm{Hom}}
\def\Tr{\mathrm{Tr}}

\def\mt{\mathsf{tr}}
\def\ptr{\mathrm{ptr}}
\def\md{\mathsf{d}}
\def\br#1{ \{ #1 \}}

\def\LG{\mathrm{LG}}
\def\CJ{\mathcal{J}}
\def\m{\widetilde{m}}
\def\A{\widetilde{A}}

\begin{document}

\title[The Colored Jones Function does not determine Links--Gould]{The Colored Jones Function does not determine any colored Links--Gould polynomial}

\author[M. Harper]{Matthew Harper}
\address{Department of Mathematics, Michigan State University, East Lansing, Michigan, USA}
\email{mrhmath@proton.me}
   
\author[B.-M. Kohli]{Ben-Michael Kohli}
\address{Section de Math\'ematiques, Universit\'e de Gen\`eve \\
rue du Conseil-G\'en\'eral 7-9, 1205 Gen\`eve, Switzerland}
\email{bm.kohli@protonmail.ch}

\thanks{
  {\em Key words and phrases:}
  Links--Gould invariant, Colored Jones polynomial, mutation.
}

\date{October 8, 2026}

\begin{abstract}
We prove that there is no uniform description of $\mathrm{LG}^{(n)}$ in terms of colored Jones polynomials for all knots, even when allowing infinitely many colors and arbitrary operations. We prove the $n=1$ case, using an existing cabling formula and the separation of the Conway and Kinoshita--Terasaka knots by $\mathrm{LG}^{(2)}$. We develop an odd cabling formula and use a Vandermonde argument to show that for each $n\geq 2$ some satellite of each of these knots is distinguished by $\mathrm{LG}^{(n)}$.
\end{abstract}
\maketitle

\section{Introduction}
We provide an elementary proof that the colored Jones polynomials do not determine any colored Links--Gould polynomial. Our proof only requires the existence of a mutant pair that is separated by $\LG^{(2)}$. By the identification $V_2=\LG^{(2)}$ of \cite{GHKKW} the Conway and Kinoshita--Terasaka knots are known to be such a pair \cite{GL:data}.

Let $J^{(n)}_K(q)$ denote the $n$-th colored Jones polynomial of a knot $K$ in the variable $q$, associated to the $(n+1)$-dimensional representation of $U_q(\mathfrak{sl}_2)$ under the Reshetikhin--Turaev construction \cite{RT:ribbon}. Let 
\[
\CJ(K)=\{J^{(n)}_K(q)\}_{n=1}^\infty
\]
be the sequence of colored Jones polynomials of $K$, which we will refer to as the \emph{colored Jones function}.

The $n$-th colored Links--Gould polynomial $\LG^{(n)}(q^\a,q)$ is defined using the $4n$-dimensional Kac representation $V(n-1,\a)$ of quantum $\mathfrak{sl}(2|1)$ \cite{Links-Gould, DW01,GHKKST, GHKKW}. We refer the reader to these references and references therein for further background.

\begin{theorem}\label{thm}
    For every $n\geq 1$, there exist knots $K$ and $K'$ such that
    \[
    \CJ(K)=\CJ(K') \qquad \mbox{and} \qquad
    \LG_{K}^{(n)}(q^\a,q)\neq \LG_{K'}^{(n)}(q^\a,q).
    \] 
    
\end{theorem}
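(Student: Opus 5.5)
The plan is to work with the Conway knot $C$ and the Kinoshita--Terasaka knot $KT$ and with satellites of them. Since $C$ and $KT$ are mutants, and colored Jones polynomials are invariant under mutation, we have $\CJ(C)=\CJ(KT)$. The same holds for any pair of satellites $P(C)$, $P(KT)$ built with a common pattern $P$. Such satellites are again mutants, because the mutation sphere can be pushed into the companion solid torus. Alternatively, $J^{(m)}$ of a satellite is a combination of colored Jones polynomials of the companion, since cabling decomposes into irreducibles of $U_q(\mathfrak{sl}_2)$. So equality of the colored Jones function is automatic for any such pair. The whole task is to find, for each $n$, a pattern $P_n$ with $\LG^{(n)}_{P_n(C)}\neq \LG^{(n)}_{P_n(KT)}$.

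\textbf{Case $n=1$.} I would use the existing cabling formula for $U_q(\mathfrak{sl}(2|1))$. It expresses $V(0,\a)\otimes V(0,\a)$ as a sum of Kac modules, one of which is $V(1,\a')$ with shifted parameter $\a'$. Hence $\LG^{(1)}$ of a suitable $2$-cable of $K$, colored by $V(0,\a)$, becomes a combination of $\LG^{(1)}_K$ and $\LG^{(2)}_K(q^{\a'},q)$ with known coefficients, possibly after framing corrections. Since $C$ and $KT$ are mutants, they have equal $\LG^{(1)}$, because the square of $V(0,\a)$ is multiplicity free. Subtracting the two cable values then leaves a nonzero multiple of $\LG^{(2)}_C-\LG^{(2)}_{KT}$, which is nonzero by \cite{GL:data} together with $V_2=\LG^{(2)}$ from \cite{GHKKW}. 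So the $2$-cables separate under $\LG^{(1)}$.

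\textbf{Case $n\ge 2$.} I would develop an odd cabling formula. Take the $(2,2k+1)$ cable pattern, or more generally a $2$-strand twisted cable. Its $\LG^{(n)}$ invariant is obtained by inserting the full-twist eigenvalues on the summands of $V(n-1,\a)^{\otimes 2}$ and weighting the corresponding colored invariants of the companion. Concretely,
\[
\LG^{(n)}_{P_k(K)}=\sum_j \theta_j^{\,2k+1}\, c_j\, \LG_K^{(V_j)},
\]
where the $V_j$ run over the composition factors of $V(n-1,\a)\otimes V(n-1,\a)$ and the $\theta_j$ are the twist eigenvalues. Here $\theta_j$ is given up to sign by $q$-powers of the Casimir values, and the $c_j$ are nonzero ratios of superdimensions.

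Among the $V_j$ I would identify a Kac module of type $V(2n-1,\cdot)$, or some $V(m,\cdot)$ whose value on $C$ versus $KT$ is known to differ. The most natural candidate is the component related to $\LG^{(2)}$ after specializing parameters. Ranging over $k=0,1,\dots,N$ gives a linear system with Vandermonde matrix $(\theta_j^{2k+1})$. Its eigenvalues $\theta_j$ are pairwise distinct as functions of $q^\a,q$, because their Casimirs differ. So if $\LG^{(n)}_{P_k(C)}=\LG^{(n)}_{P_k(KT)}$ held for all $k$, invertibility of the Vandermonde would force every summand difference to vanish, in particular the $\LG^{(2)}$-type one. That contradicts the known separation, so some $k$ works, and $K=P_k(C)$, $K'=P_k(KT)$ give the theorem.

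\textbf{Main obstacle.} The hardest step is the decomposition of $V(n-1,\a)^{\otimes 2}$. It is generally not semisimple, since atypical or projective indecomposables may appear. I need to isolate a summand whose invariant both reduces to, or is a nonzero multiple of, a known separating invariant and has a twist eigenvalue distinct from all others. I would first restrict to generic $\a$, where the tensor square decomposes into typical Kac modules $V(m,\a_m)$ with distinct parameters. Then I would check that the summand $V(1,\cdot)$, realizing $\LG^{(2)}$ after relabeling, has nonvanishing coefficient $c_j$.
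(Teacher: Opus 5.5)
Your $n=1$ step matches the paper. The gap is in the case $n\ge 2$, where you cable $C$ and $KT$ directly and try to isolate the summand carrying $\LG^{(2)}$. For $n\ge 2$ that summand is $V(1,2\a+n-1)$, and by Table~\ref{table.fusion} it occurs in $V(n-1,\a)^{\otimes 2}$ with multiplicity two. Its contribution to $\LG^{(n)}$ of the $(2,2j+1)$-cable is therefore not a dimension ratio times a power of one eigenvalue. It is
\[
\lambda_{1,n-1}^{j}\,\A^{(n)}_{1,n-1}\,\Tr_{M^{(n)}_{1,n-1}}(c_{1,n-1})\,\LG^{(2)}_K(q^{2\a+n-1},q),
\]
where $c_{1,n-1}$ is a $2\times 2$ matrix with $c_{1,n-1}^2=\lambda_{1,n-1}\,\id$.

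Invertibility does not stop this trace from vanishing, and it does vanish. As $q\to 1$ the braiding tends to the super flip. Comparing $\mathrm{sch}(S^2V)-\mathrm{sch}(\Lambda^2V)=\psi^2(\mathrm{sch}\,V)$ with Kac-module characters shows that each multiplicity-two summand $V(2r+1,\cdot)$ occurs once in the super-symmetric square and once in the super-antisymmetric square. Hence $c_{1,n-1}$ has eigenvalues $+\sqrt{\lambda_{1,n-1}}$ and $-\sqrt{\lambda_{1,n-1}}$, and its trace is $0$. So the $\LG^{(2)}$ term is absent from every odd cabling formula, and your Vandermonde argument says nothing about $\LG^{(2)}_C-\LG^{(2)}_{KT}$.

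Your other options fail too. The summand $V(2n-1,2\a)$ gives $\LG^{(2n)}$, which is not known to separate $C$ and $KT$. The $V(0,\cdot)$ summands give $\LG^{(1)}$, which cannot separate mutants. So with companions $C,KT$ there is no usable summand. Also, your weights $c_j$ must be ratios of modified dimensions: typical Kac modules have superdimension zero.

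The missing idea is to bootstrap from your $n=1$ step. Take as companions $L=C^{(2,1)}$ and $M=KT^{(2,1)}$, which that step shows are already separated by $\LG^{(1)}$. Then use the multiplicity-one summand $V(0,2\a+n-1)$. There $c_{0,n-1}$ is an invertible scalar, so its term is a nonzero multiple of $\LG^{(1)}_K(q^{2\a+n-1},q)$. The $3n$ values $\lambda_{k,\ell}$ are pairwise distinct, so the Vandermonde system over $0\le j\le 3n-1$ forces $\LG^{(n)}_{L^{(2,2j+1)}}\neq \LG^{(n)}_{M^{(2,2j+1)}}$ for some $j$. The colored Jones functions still agree on these iterated satellites, which gives the theorem.
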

Let us describe witnesses to the theorem. For a knot $K$, write $K^{(2,p)}$ for its $(2,p)$-cabling, that is, the satellite knot obtained by 2-cabling a zero-framing of $K$ and applying $p$ twists. When $p$ is odd, the result is a knot. Throughout we use the notation
\begin{align*}
    C=\mathsf{11n_{34}}, && KT=\mathsf{11n_{42}}, && L=C^{(2,1)}, && M=KT^{(2,1)}.
\end{align*}
The knots $C$ and $KT$ are the Conway knot and Kinoshita--Terasaka knot mutant pair. These knots and their satellites are known to have equal colored Jones functions by joining results of \cite{MortonTraczyk,MortonStrickland}.  One could instead begin with any pair of knots with equal colored Jones functions that are distinguished by $\LG^{(2)}$. We prove the  $n=1$ case in Section \ref{sec.n=1}. This case is relatively straightforward and only uses the $(2,1)$-cabling formula proven in \cite{GHKKW} to show that the Links--Gould polynomial distinguishes $L$ from $M$.

The proof of the general case relies on a $(2,2j+1)$-cabling formula for $\LG^{(n)}$, which we prove in Section \ref{sec.oddcable}, building on the representation theoretic discussion in Section \ref{sec.reps}. Our proof shows that the pair $K=L^{(2,2j+1)}$ and $K'=M^{(2,2j+1)}$ for some $0\leq j \leq 3n-1$ is a witness to Theorem \ref{thm}. With additional work, one may be able to determine exactly which cablings of $L$ and $M$ to use for each $n$.  

As an immediate corollary, no colored Links--Gould polynomial $\LG^{(n)}$ is a function of the colored Jones polynomials.
\begin{corollary}\label{cor}
    For every integer $n\geq 1$, there does not exist a function $\Phi_n$ on the image of the colored Jones function such that for all knots $K$,
    \[
    \LG^{(n)}_K(q^\a,q)=\Phi_n(\CJ(K)).
    \]
\end{corollary}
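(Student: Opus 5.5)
This corollary follows directly from Theorem \ref{thm}, by contradiction. Fix $n\geq 1$ and suppose there were a function $\Phi_n$, defined on the image of the colored Jones function, with $\LG^{(n)}_K(q^\a,q)=\Phi_n(\CJ(K))$ for every knot $K$.

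Theorem \ref{thm} supplies knots $K$ and $K'$ with $\CJ(K)=\CJ(K')$ and $\LG^{(n)}_K(q^\a,q)\neq\LG^{(n)}_{K'}(q^\a,q)$. For instance, take $K=L^{(2,2j+1)}$ and $K'=M^{(2,2j+1)}$ for a suitable $0\leq j\leq 3n-1$ when $n\geq 2$, and $L$, $M$ when $n=1$.

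Since $\CJ(K)$ and $\CJ(K')$ are the same element of the domain of $\Phi_n$, and $\Phi_n$ is a function, it assigns that element a single value. Hence
\[
\LG^{(n)}_K(q^\a,q)=\Phi_n(\CJ(K))=\Phi_n(\CJ(K'))=\LG^{(n)}_{K'}(q^\a,q),
\]
which contradicts the choice of $K$ and $K'$. So no such $\Phi_n$ exists.

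Nothing about $\Phi_n$ is assumed beyond its being a well-defined map: no continuity, algebraicity, or dependence on only finitely many colors. This is exactly the sense of "arbitrary operations" and "infinitely many colors" in the abstract. There is no real obstacle here; all of the difficulty lies in Theorem \ref{thm}.
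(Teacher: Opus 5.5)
Your proof is correct and follows the paper's approach: the paper calls this corollary immediate from Theorem \ref{thm}. The witnessing pairs are $L, M$ for $n=1$ (Section \ref{sec.n=1}) and $L^{(2,2j+1)}, M^{(2,2j+1)}$ for $n\geq 2$; each pair has equal colored Jones functions but distinct $\LG^{(n)}$, so no function $\Phi_n$ of $\CJ$ can exist.
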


We prove an additional corollary in Section \ref{sec.cor2}.
\begin{corollary}\label{cor2}
    For every $n\geq 1$ and $N\geq2$, there exist knots $K_1,\dots, K_N$ with equal colored Jones sequences $\CJ(K_1)=\dots = \CJ(K_N)$ but their $n$-th colored Links--Gould polynomials $\LG_{K_1}^{(n)}, \dots, \LG_{K_N}^{(n)}$ are pairwise distinct.
\end{corollary}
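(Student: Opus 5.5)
The plan is to bootstrap from the witness pair of Theorem~\ref{thm} using connected sums. Fix $n\geq 1$ and let $K,K'$ be knots with $\CJ(K)=\CJ(K')$ and $P:=\LG^{(n)}_K\neq Q:=\LG^{(n)}_{K'}$. For $i=1,\dots,N$ set
\[
K_i \;=\; K^{\#(i-1)}\,\#\,K'^{\#(N-i)},
\]
the connected sum of $i-1$ copies of $K$ and $N-i$ copies of $K'$.

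The first step uses that both invariants are multiplicative under connected sum. The colored Jones polynomials $J^{(m)}$, normalized so the unknot has value $1$, are multiplicative because the representations are simple, so a $(1,1)$-tangle acts by a scalar. Likewise $\LG^{(n)}$, defined by cutting a knot open and reading off the scalar by which the resulting $(1,1)$-tangle acts on the simple Kac module $V(n-1,\a)$, is multiplicative. This multiplicativity is necessary because the quantum dimension of $V(n-1,\a)$ vanishes. Since $\CJ(K)=\CJ(K')$ termwise, every $K_i$ has $\CJ(K_i)=\CJ(K)^{N-1}$ termwise, so all the colored Jones sequences coincide. On the other side,
\[
\LG^{(n)}_{K_i}=P^{\,i-1}Q^{\,N-i}.
\]

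The second step is to show these values are pairwise distinct. Work in the field of fractions of the Laurent polynomial ring in $q^\a$ and $q$, which is an integral domain; there $P,Q\neq 0$, since they specialize to $1$ at the unknot-type normalization and are nonzero invariants. Suppose $P^{a}Q^{N-1-a}=P^{b}Q^{N-1-b}$ with $a<b$. Then $(P/Q)^{b-a}=1$, so $P/Q$ is a root of unity in the rational function field $\BQ(q^\a,q)$. The only such roots of unity are $\pm1$. Hence distinctness reduces to excluding $P=-Q$.

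The main obstacle I expect is this sign issue, i.e.\ showing $\LG^{(n)}_K\neq-\LG^{(n)}_{K'}$. My first attempt would be to find a specialization at which $\LG^{(n)}$ takes the same nonzero value on every knot, which immediately forces $P\neq -Q$. Candidates are $q=1$, or the evaluation relating $\LG^{(n)}$ to a power of the Alexander polynomial at $t=1$, where all knots give $1$. For the two-variable Links--Gould polynomial such a normalization is standard. For $\LG^{(n)}$ I would check it via the weight-space/$R$-matrix description at $q\to1$, where the braid action becomes a signed permutation with trace independent of the knot. As a fallback, the proof of Theorem~\ref{thm} offers freedom: the witnesses are $L^{(2,2j+1)}$ and $M^{(2,2j+1)}$ for some $j$ in a range. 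The Vandermonde argument there should show that the differences of the values over several $j$ are not all zero, and one can likely also arrange that the relevant pair is not related by a sign. Once $P\neq \pm Q$ is established, the knots $K_1,\dots,K_N$ have equal colored Jones functions and pairwise distinct $\LG^{(n)}$, proving Corollary~\ref{cor2}.
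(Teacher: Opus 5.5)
Your proposal is essentially the paper's proof: the same connected sums $K^{\#(i-1)}\#K'^{\#(N-i)}$, multiplicativity of both invariants, and the reduction to $P/Q=\pm1$ in $\BQ(q^\a,q)$, with $P/Q=-1$ ruled out by a specialization taking the same nonzero value on all knots. The paper settles the step you leave open by citing $\LG^{(n)}_K(1,q)=1$ for every knot $K$ \cite[Thm.1.3]{GHKKW}, i.e.\ the specialization $q^\a=1$ (your $q\to1$ signed-permutation heuristic only works if $q^\a\to1$ as well), and it is this same fact, not the normalization at the unknot, that guarantees $P,Q\neq0$.
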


% \blue{An interesting question arising from the tools and techniques developed here is the following.}

% \begin{question}
%     Does $\LG$ (or any $\LG^{(n)}$) specialize to the Jones polynomial? 
% \end{question}

% \blue{While it is well known that $\LG^{(n)}$ specializes to the Alexander polynomial CITE, the authors believe that the answer to Question REF is likely negative. The results of this paper, together with the existence of genus bounds for $\LG^{(n)}$ and the absence of analogous bounds for the Jones polynomial, provide two strong indications in this direction. However, a formal proof of this statement remains to be found.}\\

While this paper was being finalized, a proof of Theorem \ref{thm} in the $n=1$ case was posted by Garoufalidis and Li \cite{GL:JonesLG}. 

\subsection*{Acknowledgements}
MH was partially supported through the NSF-RTG grant
\#DMS-2135960.

\section{Colored Jones and Links--Gould of the mutant pair}\label{sec.n=1}
We prove the $n=1$ case of Theorem \ref{thm} and Corollary \ref{cor}, which is much simpler than for general $n$. For $n=1$, we write $\LG=\LG^{(1)}$.

Since the Conway knot and Kinoshita--Terasaka knot are related by mutation, by \cite[Thm.2]{MortonTraczyk}, any pair of links obtained by applying the same satellite operation to them have equal Jones polynomials. Furthermore, Morton-Strickland prove that $\CJ$ of a framed link is determined by the Jones polynomial $J=J^{(1)}$ of its cablings \cite[Thm.2.5,3.2]{MortonStrickland}.  Combining these results, the colored Jones functions do not distinguish the images of $C$ and $KT$ under any satellite operation, in particular under iterated satellite operations. 
\begin{lemma}\label{lem.CJ}
    For any integer $j$, the following equalities hold
    \[
    \CJ(L)=\CJ(M) \qquad \mbox{and} \qquad
    \CJ(L^{(2,2j+1)})=\CJ(M^{(2,2j+1)}).
    \]
\end{lemma}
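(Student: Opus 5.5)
The plan is to obtain both equalities directly from the two cited results, without computing any polynomials. The central observation is that $L$, $M$, $L^{(2,2j+1)}$ and $M^{(2,2j+1)}$ all arise by applying one and the same satellite operation to $C$ and to $KT$.

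First I would record the satellite descriptions. The $(2,p)$-cabling of a knot $K$ is the satellite with pattern $P_{2,p}\subset S^1\times D^2$, namely the closure of the $2$-braid $\sigma_1^{p}$, embedded along the zero-framing of $K$. Thus $L=C^{(2,1)}$ and $M=KT^{(2,1)}$ come from the pattern $P_{2,1}$. For the iterated cables, the map $K\mapsto (K^{(2,1)})^{(2,2j+1)}$ is also a satellite operation. Its pattern $Q_j\subset S^1\times D^2$ is obtained by placing $P_{2,2j+1}$ in a neighbourhood of $P_{2,1}\subset S^1\times D^2$, using the framing of $P_{2,1}$ induced from the zero-framing of the companion. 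Here I have to be careful: the zero-framing of $K^{(2,1)}$ in $S^3$ must correspond to one fixed framing of $P_{2,1}$ in the solid torus, independent of $K$. This holds because the linking number of a pushoff of the pattern with the pattern is computed inside the solid torus, and the zero-framed embedding of the solid torus preserves that computation. The outcome is a single pattern $Q_j$ with $L^{(2,2j+1)}=Q_j(C)$ and $M^{(2,2j+1)}=Q_j(KT)$.

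Next, fix any pattern $Q$; it will be either $P_{2,1}$ or $Q_j$. Let $Q^{(m)}$ denote its framed $m$-parallel cabling, which is again a pattern in $S^1\times D^2$. Since $C$ and $KT$ are mutants, \cite[Thm.2]{MortonTraczyk} gives $J(Q^{(m)}(C))=J(Q^{(m)}(KT))$ for every $m$, because $Q^{(m)}(K)$ is the satellite of $K$ with pattern $Q^{(m)}$. By \cite[Thm.2.5,3.2]{MortonStrickland}, the colored Jones polynomials $J^{(n)}$ of the framed knot $Q(K)$ are determined by the Jones polynomials of its parallels $Q(K)^{(m)}=Q^{(m)}(K)$. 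Hence $\CJ(Q(C))=\CJ(Q(KT))$.

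The framing normalization needs a short remark. The satellites $Q(C)$ and $Q(KT)$ carry the framings induced from the zero-framings of $C$ and $KT$. Their writhes, or self-linking numbers, agree, since they depend only on $Q$ and not on the companion. So converting to the zero-framed, unframed normalization of $\CJ$ multiplies both sides by the same power of $q$.

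I expect the main obstacle to be the bookkeeping of framings in the iterated cable: checking that the composite of the two cabling operations really is one satellite operation that does not depend on the companion, and that the Morton–Strickland cabling statement applies with consistent framings. Everything else is a direct citation.
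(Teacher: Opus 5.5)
Your proposal is correct and follows the same route as the paper. Morton--Traczyk gives equal Jones polynomials for all satellites of the mutant pair $C$, $KT$, including all parallels of the satellites. Morton--Strickland then recovers $\CJ$ from the Jones polynomials of those parallels. Your extra framing bookkeeping, realizing $K\mapsto (K^{(2,1)})^{(2,2j+1)}$ as a single companion-independent pattern, makes explicit what the paper compresses into ``in particular under iterated satellite operations.''
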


The ordinary Links--Gould polynomial does not detect mutation, due to the tensor product $V(0,\a)^{\otimes 2}$ being multiplicity free \cite[Thm.5]{MortonCromwell} with \cite{GHKKW}, also see \cite[Sec.4.8]{DWKL}. However, $\LG^{(2)}$ does detect mutation and indeed distinguishes $C$ and $KT$. This is observed using the data of Garoufalidis--Li and the identification of the $V_2$-polynomial with $\LG^{(2)}$ \cite{GL:data,GHKKW}. 

Recall the $(2,1)$-cabling formula for a $0$-framed knot $K$ proven in \cite[Rem.4.1]{GHKKW}:
\begin{gather}\label{eq.LGcable}
    \begin{aligned}
    \LG_{K^{(2,1)}}(q^\a,q) = \,&
    q^{2\a} A^{(2)}_{0,0}(q^\a,q) \,\LG_{K}(q^{2\a},q) 
    +
    q^{-2(\a+1)} A^{(2)}_{0,1}(q^\a,q)\, \LG_{K}(q^{2\a+1},q)
\\
  &-A^{(2)}_{1,0}(q^\a,q)\, \LG^{(2)}_K(q^{2\a},q).
    \end{aligned}
\end{gather}
The coefficient
\begin{align*}
    A^{(2)}_{1,0} = -\frac{\br{2}\br{\a}\br{\a+1}}{\br{1}\br{2\a}\br{2\a+2}} \qquad \mbox{where $\br{x}=q^x-q^{-x}$}.
\end{align*}

\begin{lemma}
    There does not exist a function $\Phi$ on the set of $\BZ[q,q^{-1}]$-sequences such that
    \[
    \LG_K(q^\a,q)=\Phi(\CJ(K))
    \]
    for all $K$. In particular, $\LG$ is not determined by $\CJ$. 
\end{lemma}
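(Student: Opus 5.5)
It suffices to exhibit two knots with equal colored Jones functions but different Links--Gould polynomials. I take $L=C^{(2,1)}$ and $M=KT^{(2,1)}$. By Lemma \ref{lem.CJ}, $\CJ(L)=\CJ(M)$. So the task reduces to showing $\LG_L(q^\a,q)\neq\LG_M(q^\a,q)$. Granting that, any function $\Phi$ would satisfy $\LG_L=\Phi(\CJ(L))=\Phi(\CJ(M))=\LG_M$, which is a contradiction.

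To compare $\LG_L$ and $\LG_M$, I apply the cabling formula \eqref{eq.LGcable} to $K=C$ and $K=KT$ and subtract. The first two terms involve only the ordinary Links--Gould polynomial $\LG_K$ at the substituted variables $q^{2\a}$ and $q^{2\a+1}$. Since $C$ and $KT$ are mutants and $\LG$ does not detect mutation (the multiplicity-freeness of $V(0,\a)^{\otimes 2}$ together with \cite{MortonCromwell}), we have $\LG_C=\LG_{KT}$ as two-variable functions. Hence these terms agree after any substitution. The difference therefore collapses to
\[
\LG_L(q^\a,q)-\LG_M(q^\a,q) = -A^{(2)}_{1,0}(q^\a,q)\bigl(\LG^{(2)}_C(q^{2\a},q)-\LG^{(2)}_{KT}(q^{2\a},q)\bigr).
\]

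It remains to see that the right-hand side is nonzero. The coefficient $A^{(2)}_{1,0}$ is explicitly a nonzero rational function of $q^\a$ and $q$; its numerator $\br{2}\br{\a}\br{\a+1}$ is not identically zero. The difference $\LG^{(2)}_C-\LG^{(2)}_{KT}$ is nonzero by the Garoufalidis--Li computations \cite{GL:data} combined with $V_2=\LG^{(2)}$ \cite{GHKKW}.

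The one point needing care is that substituting $q^\a\mapsto q^{2\a}$ preserves nonvanishing. I treat $t=q^\a$ as an independent variable, so the substitution is $t\mapsto t^2$. This is an injective ring endomorphism of the Laurent polynomial ring $\BZ[t^{\pm1},q^{\pm1}]$, or of its fraction field if $\LG^{(2)}$ is normalized rationally. So a nonzero difference stays nonzero. A product of nonzero elements in an integral domain is nonzero, which gives $\LG_L\neq\LG_M$. This injectivity and domain check is the only step I expect to need any argument; the rest is bookkeeping with results already cited.
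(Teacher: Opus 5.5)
Your proposal is correct and follows the paper's proof: take $L=C^{(2,1)}$ and $M=KT^{(2,1)}$, use Lemma~\ref{lem.CJ} for the colored Jones equality, and subtract \eqref{eq.LGcable} so that only $-A^{(2)}_{1,0}\bigl(\LG^{(2)}_C(q^{2\a},q)-\LG^{(2)}_{KT}(q^{2\a},q)\bigr)$ survives. You also make explicit two points the paper leaves implicit: the first two terms cancel because $\LG$ is mutation invariant, and the substitution $q^\a\mapsto q^{2\a}$ preserves nonvanishing.
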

\begin{proof}
    It suffices to exhibit $L=C^{(2,1)}$ and $M=KT^{(2,1)}$ as a pair for which the colored Jones functions are equal, but which are distinguished by the Links--Gould polynomial. The first claim is exactly the statement of Lemma \ref{lem.CJ}. Now consider their difference of Links--Gould polynomials and apply \eqref{eq.LGcable},
    \[
    \LG_L(q^\a,q)-\LG_M(q^\a,q)=
    -A^{(2)}_{1,0}(q^\a,q)\left(\LG^{(2)}_C(q^{2\a},q)-\LG^{(2)}_{KT}(q^{2\a},q)\right).
    \]
    Since $\LG^{(2)}$ distinguishes $C$ from $KT$, and $A^{(2)}_{1,0}$ is nonzero, $\LG$ distinguishes $L$ and $M$.
\end{proof}

\section{Representation theory for \texorpdfstring{$U_q^H(\mathfrak{sl}(2|1))$}{Uqsl(2|1)}}\label{sec.reps}
In this section we discuss properties of the tensor square of representations defining $\LG^{(n)}$ for $n\geq 2$. We establish the multiplicities of summands and notation for the projectors and inclusions realizing the direct sum.

\subsection{The representation \texorpdfstring{$V$}{V} and its tensor square} 
Fix $n\geq 2$ and set 
\[V=V_{n-1}(n-1,\a)=V(n-1,\a)\]
the representation associated to $\LG^{(n)}$ in the variables $(q^\a,q)$. Here $V_p(n-1,\a)$ is the $4n$-dimensional representation of $U_q^H(\mathfrak{sl}(2|1))$ with highest weight $(n-1,\a)\in \BZ\times \BC$ and parity of the highest weight vector $p\mod 2$. Throughout we will assume that $(n-1,\a)$ is a typical weight, i.e. $\a(n+\a)\neq0$, so that $V$ is irreducible. We further suppose that $\a\notin\frac{1}{2}\BZ$ so that the later tensor product decomposition holds. We will suppress the parity in our notation for the modules and for $\LG$ as in \cite{GHKKW}. We recall the formulae for the modified dimension and twist on $V$
\begin{align}\label{eq.mdth}
    \md(V)=(-1)^{n-1}\frac{\{n\}}{\{\alpha\}
    \{n+\alpha\}} && \mbox{and} && \th_V=q^{-2\a(n+\a)}
\end{align}
from \cite[Eqn.(84), Proof of Prop.3.14]{GHKKW}. 
We note the slight change in notation compared to \textit{loc.~cit.}, here $\th_V$ indicates the scalar value of the twist morphism in $\End(V)$ rather than the morphism itself.

Let $\m^{(n)}_{k,\ell}$ denote the multiplicity of $V(k,2\a+\ell)$ in $V\otimes V$. In other words,
\[
V(n-1,\a)\otimes V(n-1,\a)\cong \bigoplus_{k,\ell\geq0} \m^{(n)}_{k,\ell} \, V_k(k,2\a+\ell).
\]
With this fixed grading convention, we assume that $V_k(k,2\a+\ell)=V(k,2\a+\ell)$ has parity $k\mod 2$.
We caution the reader not to confuse $\m$ with  $m$ used in \cite{GHKKW} to denote the multiplicity of $V(k,n\a+l)$ in $V(0,\a)^{\otimes n}$. For later use, we also define an analog of $A^{(n)}_{k,\ell}$ for the ratio of modified dimensions between summands and $V$. Set
\begin{equation}\label{eq.A}
    \begin{aligned}
        \A^{(n)}_{k,\ell}=\frac{\md(V(k,2\a+\ell))}{\md(V(n-1,\a))}
        =(-1)^{n-k-1}\frac{\br{k+1}\br{\a}\br{\a+n}}{\br{n}\br{2\a+\ell}\br{2\a+k+\ell+1}}.
    \end{aligned}
\end{equation}
Notice that $\A^{(n)}_{0,n-1}$ is nonzero.

\begin{lemma}\label{lem.fusion}
    The nonzero values of $\m^{(n)}_{k,\ell}$ are given by Table \ref{table.fusion}. In particular, there are $3n$ isotypic components and $V(0,2\a+n-1)$ has multiplicity one.
\end{lemma}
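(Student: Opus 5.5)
We compute the decomposition of $V(n-1,\a)\otimes V(n-1,\a)$ in two steps: first find all candidate summands by a character (weight) computation, then use the genericity of $\a$ to show the tensor product is semisimple, so that the character determines the multiplicities.

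\textbf{Step 1: characters.} Recall that a typical Kac module $V(k,\b)$ restricts to the even part $U_q(\mathfrak{gl}_2)$ as
\[
V(k,\b)|_{\mathfrak{gl}_2}\cong L_k\otimes\bigl(\chi_\b\oplus \chi_{\b+1}\otimes L_1\oplus\chi_{\b+2}\bigr).
\]
Here $L_k$ is the $(k+1)$-dimensional $\mathfrak{sl}_2$-module and $\chi$ records the $\mathfrak{gl}_1$-weight. The shifts are up to convention, but the factor in parentheses is the character of the exterior algebra on the two odd negative root vectors. So the character of $V(k,\b)$ factors as $\mathrm{ch}(L_k)\,e^{\b}\,E$, where $E$ is this fixed exterior-algebra factor.

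Taking the tensor square gives
\[
\mathrm{ch}(V\otimes V)=\mathrm{ch}(L_{n-1})^2\,e^{2\a}\,E^2 .
\]
We then expand $E=\bigl(\text{Kac character factor}\bigr)\cdot$ (a sum over weights of the odd exterior algebra) and apply Clebsch--Gordan to $L_{n-1}\otimes L_{n-1}=\bigoplus_{s=0}^{n-1}L_{2s}$. Concretely, $\mathrm{ch}(V\otimes V)$ equals $\sum_s \mathrm{ch}(L_{2s})\,e^{2\a}\,E\cdot E'$, where $E'$ is the character of the exterior algebra, namely $1+e^{1}\mathrm{ch}(L_1)+e^{2}$ in suitable coordinates.

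Next we regroup $L_{2s}\otimes(1\oplus L_1\oplus 1)$ by Clebsch--Gordan as $L_{2s}\oplus L_{2s+1}\oplus L_{2s-1}\oplus L_{2s}$, with the appropriate $\mathfrak{gl}_1$-shifts. Each resulting term multiplied by $e^{2\a}E$ is exactly the character of a Kac module $V(k,2\a+\ell)$. Reading off the pairs $(k,\ell)$ gives the entries of Table~\ref{table.fusion}.

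Counting distinct pairs $(k,\ell)$ should then give $3n$ isotypic components, with a few of them appearing with multiplicity $2$ (the two copies of $L_{2s}$ at middle $\ell$). The weight $(0,2\a+n-1)$ arises only from the $s=0$ term, namely $L_0$ at the extreme shift, so it has multiplicity one. The exact boundary cases, $s=0$ where $L_{-1}$ is absent and $s=n-1$, have to be tracked carefully to get the count $3n$.

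\textbf{Step 2: semisimplicity.} Since $\a\notin\frac12\BZ$, every $V(k,2\a+\ell)$ appearing has typical weight: $(2\a+\ell)(2\a+\ell+k+1)\neq0$. Typical modules are projective in the relevant category of weight modules. Moreover, the distinct summands have pairwise distinct central characters, or at least lie in blocks each containing a single simple module up to the multiplicity. Hence $V\otimes V$ splits as a direct sum of typical Kac modules, and the character identity from Step 1 determines the multiplicities $\m^{(n)}_{k,\ell}$.

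We expect the main obstacle to be the bookkeeping in Step 1. The $\mathfrak{gl}_1$ shift convention for $\ell$ must match the paper's highest-weight normalization, and the small-$s$ boundary terms must be counted correctly. As a sanity check we would verify the result for $n=1$ against the known decomposition of $V(0,\a)^{\otimes2}$ from \cite{GHKKW}, and check that the dimensions sum to $16n^2$.
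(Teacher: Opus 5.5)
Your overall route is sound and differs from the paper's. The paper quotes the decomposition rule \cite[Lem.~3.9]{GHKKW} and asserts it survives at $\a=\b$ because all weights are typical; your Step~2 is essentially the argument behind that assertion. It is cleanest to put it this way: the character identity and linear independence of simple characters show every composition factor of $V\otimes V$ is a typical Kac module, and typical simples are projective and injective, so $V\otimes V$ is semisimple.

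The gap is in Step~1, which is the entire content of the lemma. It is not carried out, and as written it would not produce Table~\ref{table.fusion}.

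First, your formula treats the second label as a $\mathfrak{gl}_1$-weight that is constant on each $\mathfrak{sl}_2$-summand and additive under tensor product. That cannot reproduce the table, since in the even-$k$ rows $\ell$ depends on $r$. Your own proposed $n=1$ check exposes this: your restriction formula gives $V(0,\a)^{\otimes 2}\cong V(0,2\a)\oplus V(1,2\a+1)\oplus V(0,2\a+2)$, whereas \eqref{eq.LGcable} encodes $V(0,2\a)\oplus V(0,2\a+1)\oplus V(1,2\a)$. Write $L_{(k,\b)}$ for the simple even-part module of highest weight $(k,\b)$ in the paper's labels. The correct statements are $V(k,\b)|_{\mathfrak{g}_0}\cong L_{(k,\b)}\oplus L_{(k+1,\b)}\oplus L_{(k-1,\b+1)}\oplus L_{(k,\b+1)}$ and $L_{(n-1,\a)}^{\otimes 2}\cong\bigoplus_{s=0}^{n-1}L_{(2s,\,2\a+n-1-s)}$. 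Your formula is right only if its subscript is read as the $\mathfrak{g}_0$-central weight $k+2\b$, not as $\b$. Translating back is exactly what produces the $r$-dependence of $\ell$.

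Second, your identification of the multiplicity-two summands is wrong. The two copies of $L_{2s}$ inside $L_{2s}\otimes\Lambda$ lie in the degree-$0$ and degree-$2$ parts of the exterior algebra, carry different labels, and give two \emph{distinct} multiplicity-one summands.

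With the correct labels, set $\b_s=2\a+n-1-s$. The $s$-th Clebsch--Gordan term contributes:
\begin{itemize}
\item $V(2s,\b_s)$ and $V(2s,\b_s+1)$;
\item $V(2s+1,\b_s)$;
\item $V(2s-1,\b_s+1)$, absent for $s=0$.
\end{itemize}
The even summands $V(2s,2\a+n-1-s)$ and $V(2s,2\a+n-s)$ occur once each; these are rows one and two of the table. The doubling happens for odd $k$, across adjacent terms: $V(2s+1,\b_s)$ from term $s$ coincides with $V(2(s+1)-1,\b_{s+1}+1)$ from term $s+1$. This gives multiplicity two for $0\le s\le n-2$ (row three) and multiplicity one for $s=n-1$, namely $V(2n-1,2\a)$ (row four). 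The count is $n+n+(n-1)+1=3n$. Since $k=0$ arises only in the $s=0$ term, $V(0,2\a+n-1)$ (the degree-$0$ piece) has multiplicity one. With Step~1 repaired this way, your argument proves the lemma and is more self-contained than the paper's citation.
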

\begin{table}[h!]
\centering
\[
\begin{array}{cccc}
k & \ell & \text{range} & \widetilde m^{(n)}_{k,\ell} \\
\hline
2r   & n-1-r & 0\leq r\leq n-1 & 1 \\
2r   & n-r   & 0\leq r\leq n-1 & 1 \\
2r+1 & n-1-r & 0\leq r\leq n-2 & 2 \\
2n-1 & 0     & - & 1
\end{array}
\]
\caption{Nonzero multiplicities of \(V(k,2a+\ell)\)
in \(V_{n-1}(n-1,a)^{\otimes 2}\).}
\label{table.fusion}
\end{table}

\begin{proof}
    We refer to the decomposition rule \cite[Lem.3.9]{GHKKW}. Although it is stated for $\a$, $\b$, and $1$ being $\BZ$-linearly independent, it holds even when $\a=\b$ under our standing assumption $\a\notin \frac{1}{2}\BZ$ so that all weights are typical. In our case, the isomorphism is 
    \begin{equation*}
        \begin{aligned}
        V(n-1,\a)^{\otimes 2}\cong&~ 
V(2n-1,2\a) \\
  &\oplus \bigoplus_{k=0}^{2n-2} V\left(k, 2\a+n-1+
\lfloor \tfrac{2-k}{2} \rfloor\right) \oplus
V\left(k, 2\a+n-1 + \lfloor \tfrac{1-k}{2} \rfloor\right).
    \end{aligned}
    \end{equation*}
    There is a unique maximum $k=2n-1$ summand $V(2n-1,2\a)$. For even indices $k=2r$, where the integer $r$ varies from $0$ to $n-1$, the summands are  
    \[
    V(2r, 2\a+n-r) \oplus
V(2r, 2\a+n-r-1).
    \]
     For odd indices $k=2r+1$ where $r$ varies from $0$ to $n-2$, the summands are 
    \[
    V(2r+1, 2\a+n-1-r) \oplus
V(2r+1, 2\a+n-1 -r).
    \]
    The table organizes these components.
\end{proof}

\subsection{Projectors and inclusions} 
Fix $(k,\ell)$ and set $U=V(k,2\a+\ell)$. Let
\[
M^{(n)}_{k,\ell}=\Hom(V(k,2\a+\ell), V(n-1,\a)^{\otimes 2}) \qquad \mbox{with}\qquad \dim M^{(n)}_{k,\ell} = \m^{(n)}_{k,\ell}
\]
be the multiplicity space of $U$ in $V^{\otimes 2}$. With the exception of the odd $k$ summands of the decomposition, the multiplicity spaces are 1-dimensional. 
Define inclusion and projection maps
\begin{equation*}
    i_s:U\to V^{\otimes 2}, \qquad p_s:V^{\otimes 2}\to U,\qquad 
    p_t\circ i_s =\delta_{t,s}\id_{U}, \qquad 1\leq t,s\leq \m^{(n)}_{k,\ell}.
\end{equation*}
Although it is not explicit in the notation, a choice of $(k,\ell)$ will always be assumed when considering the maps $i_s$ and $p_s$. For any $U_q^H(\mathfrak{sl}(2|1))$-intertwiner $f\in \End(V^{\otimes 2})$, the action of $f$ on the multiplicity space $M^{(n)}_{k,\ell}$ is denoted $f_{k,\ell}$. The $(s,t)$-matrix entry of $f_{k,\ell}$ with respect to the basis induced by the $\{i_s\}_s$ is 
\[
(f_{k,\ell})_{s,t}=p_s\circ f\circ i_t.
\]
We will also write $P_{(k,\ell)}=\sum_{s} i_s\circ p_s$ for the idempotent projection on $V^{\otimes 2}$ which factors through $U^{\oplus \m^{(n)}_{k,\ell}}$. We note the channeling identity 
\begin{align}\label{eq.channel}
    \id_{V^{\otimes 2}}=\sum_{k,\ell\geq0} P_{(k,\ell)}.
\end{align}

\section{Proof of Theorem \texorpdfstring{\ref{thm}}{\ref*{thm}} for \texorpdfstring{$n\geq 2$}{n>=2}}
The proof of the theorem is organized into two parts. First, we compute the colored Links--Gould polynomials of a $(2,2j+1)$-cabling of a knot $K$ in terms of colored Links--Gould polynomials of $K$. Second, we construct a linear system associated to the isotypic components of $V^{\otimes2}$ and the odd-cabling factors and determine that it does {not admit} a solution where $\LG^{(n)}_{L^{(2,2j+1)}}=\LG^{(n)}_{M^{(2,2j+1)}}$ for all $j$ between $0$ and $3n-1$. The proof applies uniformly for all $n\geq 2$ and is not inductive.

\subsection{Odd-cabling formula} \label{sec.oddcable} We apply the representation theory of the previous section to establish properties of the braiding and RT morphisms. In this section, we prove a general form for the Links--Gould polynomial of a $(2,2j+1)$-cabling. 
We carry the notation $U=V(k,2\a+\ell)$ from the previous section.

Let $c=c_{V,V}$ be the braiding on $V\otimes V$. Let $K$ be a blackboard-framed knot diagram with zero writhe, representing its zero-framing. Let $T$ be a $(1,1)$-tangle representative of $K$ and $F_T\in\End(V^{\otimes 2})$ the Reshetikhin--Turaev operator for $T$ colored by $V^{\otimes 2}$. If we instead consider $T$ colored by the representation $U$, then we write $F_{U,T}$. By definition
\begin{align*}
    F_{U,T} = \LG^{(k+1)}(q^{2\a+\ell},q) \, \id_U.
\end{align*}

\begin{lemma}\label{lem.FTisot}
    The action of $F_T$ on each $V(k,2\a+\ell)$-isotypic component of $V^{\otimes 2}$ is by the scalar $\LG^{(k+1)}(q^{2\a+\ell},q)$. Equivalently,
    \[
    F_T\circ P_{(k,\ell)}=P_{(k,\ell)}\circ F_T = \LG^{(k+1)}(q^{2\a+\ell},q)\cdot P_{(k,\ell)}
    \]
\end{lemma}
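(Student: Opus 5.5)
The approach is naturality of the Reshetikhin--Turaev functor with respect to morphisms of the coloring. Since $F_T$ is the operator of a $(1,1)$-tangle whose single strand is colored by the object $V^{\otimes 2}$, it is natural in the color. Concretely, for any morphism $g: W\to W'$ we have $F_{W',T}\circ g = g\circ F_{W,T}$: a coupon labeled $g$ can be slid along the open strand through the tangle. Here $F_T = F_{V^{\otimes 2},T}$.

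The plan has three steps.

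\begin{enumerate}
\item Apply naturality with $g=i_s: U\to V^{\otimes 2}$, for each $1\le s\le \m^{(n)}_{k,\ell}$. This gives $F_T\circ i_s = i_s\circ F_{U,T}$.
\item Apply it again with $g=p_s: V^{\otimes 2}\to U$. This gives $p_s\circ F_T = F_{U,T}\circ p_s$.
\item Substitute the defining identity $F_{U,T}=\LG^{(k+1)}(q^{2\a+\ell},q)\,\id_U$. This holds because $U=V(k,2\a+\ell)$ is the typical irreducible module defining $\LG^{(k+1)}$ at parameter $q^{2\a+\ell}$, and $T$ is zero-framed so no twist correction appears.
\end{enumerate}

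Multiplying the first identity on the right by $p_s$ and summing over $s$ gives
\[
F_T\circ P_{(k,\ell)}=\sum_s F_T\circ i_s\circ p_s=\LG^{(k+1)}(q^{2\a+\ell},q)\,P_{(k,\ell)}.
\]
Symmetrically, multiplying the second identity on the left by $i_s$ and summing gives the same value for $P_{(k,\ell)}\circ F_T$. Summing over the channeling identity \eqref{eq.channel} then shows that $F_T$ acts diagonally on the isotypic decomposition with the stated scalars. In particular, on each multiplicity space $M^{(n)}_{k,\ell}$ the matrix $(F_T)_{k,\ell}$ is scalar.

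The step needing care is the justification of naturality and of the scalar identification, and I will handle it in two parts.

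\emph{Sliding the coupon.} For an honest $(1,1)$-tangle colored by $W$, the RT operator is built from braidings, (co)evaluations and twists on the object $W$. Each of these is natural in $W$, so the coupon slides along the strand. One subtlety is the supergrading. The summand $V_k(k,2\a+\ell)$ carries parity $k\bmod 2$, and the $i_s$ must be even morphisms, which they are by the grading convention fixed above.

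\emph{Identifying the scalar.} I expect the main point to be that $\LG^{(k+1)}$ at $(q^{2\a+\ell},q)$ is defined exactly as this scalar, using the same parity convention and a typical weight. This is ensured by $\a\notin\frac12\BZ$, so that $2\a+\ell$ is generic and $U$ is irreducible; Schur's lemma then makes $F_{U,T}$ scalar. If a different parity appeared, the scalar could change only by a sign convention that the paper suppresses. I would simply cite the definition as stated in the text.
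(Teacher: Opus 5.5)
Your proposal is correct and follows the paper's proof. Both arguments apply naturality of the Reshetikhin--Turaev operator to the inclusions $i_s$ and projections $p_s$, substitute $F_{U,T}=\LG^{(k+1)}(q^{2\a+\ell},q)\,\id_U$, and then expand $P_{(k,\ell)}=\sum_s i_s\circ p_s$; your side remarks on parity and typicality match conventions the paper already fixes.
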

\begin{proof}
    We apply naturality of the RT construction on each inclusion and projection map, see for example \cite[Lem.2.2]{GHKKW}. Then for each $s$ and $t$, 
    \[
    F_T\circ i_s = i_s\circ F_{U,T}\qquad \mbox{and} \qquad p_t \circ F_T= F_{U,T}\circ p_t.
    \]
    Hence, we have the scalar action
    \[
    (F_T)_{s,t}=p_s\circ F_T\circ i_t=\delta_{s,t}\, \LG^{(k+1)}(q^{2\a+\ell},q).
    \]
    The second claim follows from expanding $P_{(k,\ell)}=\sum_{s} i_s\circ p_s$ and again applying naturality.
\end{proof}

We now give a description of the odd-braiding on multiplicity space.
\begin{lemma}\label{lem.cisot}
    For each $V(k,2\a+\ell)$-isotypic component of $V^{\otimes 2}$, the braiding satisfies
    \[
    c_{k,\ell}^{2j+1}=\lambda_{k,\ell}^j c_{k,\ell} \qquad \mbox{where $\lambda_{k,\ell}=\dfrac{\th_{V(k,2\a+\ell)}}{\th_{V}^2}$.}
    \]
\end{lemma}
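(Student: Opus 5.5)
The plan is to show that $c^2$ acts on each isotypic component by the scalar $\lambda_{k,\ell}$. Once that is known, the statement follows by induction on $j$, since
\[
c_{k,\ell}^{2j+1}=(c^2)_{k,\ell}^{\,j}\,c_{k,\ell}.
\]
This uses that $c$ is an intertwiner, so $f\mapsto f_{k,\ell}$ is multiplicative: $(fg)_{k,\ell}=f_{k,\ell}g_{k,\ell}$ because $\sum_u i_u p_u = P_{(k,\ell)}$ and $P_{(k,\ell)}$ commutes with intertwiners. It does \emph{not} use that $c$ itself is scalar on the multiplicity space. Indeed, on the two-dimensional odd-$k$ components, $c_{k,\ell}$ need not be scalar, which is exactly why the formula is stated with the extra factor $c_{k,\ell}$.

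To compute $c^2$, I would use the standard ribbon identity
\[
c_{V,V}\circ c_{V,V}=\th_{V\otimes V}\circ(\th_V^{-1}\otimes\th_V^{-1}),
\]
which comes from the twist axiom $\th_{X\otimes Y}=c_{Y,X}c_{X,Y}(\th_X\otimes\th_Y)$ with $X=Y=V$. Here $\th_V\otimes\th_V$ is the scalar $\th_V^2$ from \eqref{eq.mdth}. By naturality of the twist, $\th_{V\otimes V}\circ i_s=i_s\circ\th_U$ with $\th_U=\th_{V(k,2\a+\ell)}$ a scalar, since $U$ is irreducible (typical). Hence
\[
p_s\circ c^2\circ i_t=\delta_{s,t}\,\th_U\th_V^{-2}=\delta_{s,t}\,\lambda_{k,\ell}.
\]
Equivalently, $c^2\circ P_{(k,\ell)}=\lambda_{k,\ell}P_{(k,\ell)}$.

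The main point needing care is the super setting. The twist on $V^{\otimes 2}$ and on $U$ must be taken with the parity conventions fixed in Section~\ref{sec.reps}, so that the scalar $\th_U$ is the one attached to $V_k(k,2\a+\ell)$ and no extra sign appears. For $U_q^H(\mathfrak{sl}(2|1))$ the twist acts on a typical highest weight module by a scalar determined by the highest weight, as in \cite{GHKKW}. The ribbon identity holds in this (super) ribbon category exactly as stated, with any parity signs already built into the braiding. So I expect no real obstruction beyond citing these facts. The remaining induction on $j$ is routine.
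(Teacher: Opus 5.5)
Your proposal is correct and follows the paper's proof essentially step for step. Both arguments use the ribbon identity $c^2=\th_V^{-2}\th_{V\otimes V}$ and naturality of the twist on the inclusions $i_s$ to obtain $(c^2_{k,\ell})_{s,t}=\delta_{s,t}\lambda_{k,\ell}$, then take $j$-th powers and multiply by $c_{k,\ell}$. Your explicit check that $f\mapsto f_{k,\ell}$ is multiplicative for intertwiners spells out what the paper compresses into the phrase ``since $c$ is an intertwiner.''
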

We remark that the matrix $c_{k,l}$ need not be scalar when $k$ is odd.
\begin{proof}
    As before, set $U=V(k,2\a+\ell)$. For each $s$, we have
    \[
    \th_{V\otimes V}i_s=i_s\th_{U} \qquad \mbox{and} \qquad c^2=\th_{V\otimes V}(\th_V^{-1}\otimes \th_V^{-1})\,\id_{V^{\otimes 2}} = \th_V^{-2}\th_{V\otimes V}\,\id_{V^{\otimes 2}}.
    \]
    Since $c$ is an intertwiner, we may write 
    \[
    (c_{k,\ell}^2)_{s,t}= p_s \circ c^2\circ i_t = \delta_{s,t}\,\frac{\th_U}{\th_V^2}=\delta_{s,t}\,\lambda_{k,\ell}.
    \]
    Taking $j$-th powers and multiplying by $c_{k\ell}$ yields the desired identity.
\end{proof}

\begin{proposition}\label{prop.cable}
    Let $K$ be a zero-framed knot. Then for every $j\geq0$,
    \[
    \LG_{K^{(2,2j+1)}}(q^\a,q)=\th_V^{-(2j+1)}\sum_{k,\ell\geq 0} \lambda_{k,\ell}^j\,\A_{k,\ell}^{(n)}(q^\a,q)\,  \Tr_{M^{(n)}_{k,\ell}}(c_{k,\ell})\, \LG_K^{(k+1)}(q^{2\a+\ell},q),
    \]
    where $\Tr_{M^{(n)}_{k,\ell}}$ is the trace of linear maps on $M^{(n)}_{k,\ell}$ as a vector space. Moreover, for $(k, \ell) = (0, n-1)$, the trace $\Tr_{M^{(n)}_{k,\ell}}(c_{k,\ell})$ is nonzero.
\end{proposition}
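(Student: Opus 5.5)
We express the $(2,2j+1)$-cable as a partial trace. Let $T$ be a $(1,1)$-tangle representative of the zero-framed knot $K$ with zero writhe, and take its 2-parallel, colored by $V$, as an element $F_T\in\End(V^{\otimes 2})$. Then $K^{(2,2j+1)}$ is the closure of the 2-strand tangle $c^{2j+1}\circ F_T$ in which one strand is left open. The blackboard 2-parallel of a zero-writhe diagram is the zero-framed 2-cable, and the added $2j+1$ crossings bring in the correct twisting. These crossings also alter the blackboard framing of the resulting knot, so we must correct by the twist factor $\th_V^{-(2j+1)}$ to return to the zero-framing. This accounts for the prefactor in the formula.

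Next we compute the modified partial trace of $c^{2j+1}F_T$ using the channeling identity \eqref{eq.channel}. We write
\[
c^{2j+1}F_T=\sum_{k,\ell}\sum_{s,t} i_s\circ p_s\circ c^{2j+1}F_T\circ i_t\circ p_t .
\]
By Lemma~\ref{lem.FTisot}, the operator $F_T$ acts on the $(k,\ell)$ multiplicity space by the scalar $\LG^{(k+1)}_K(q^{2\a+\ell},q)$. By Lemma~\ref{lem.cisot}, we have $(c^{2j+1})_{k,\ell}=\lambda_{k,\ell}^j c_{k,\ell}$. Hence the block $(s,t)$ equals $\lambda_{k,\ell}^j\,\LG^{(k+1)}_K(q^{2\a+\ell},q)\,(c_{k,\ell})_{s,t}\, i_s\circ p_t$.

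We then use the standard properties of the modified trace $\mt$ on the ideal of projective modules: cyclicity, and compatibility of the partial trace with $\mt$. For a morphism $i_s\circ p_t$, these give
\[
\mt(i_s p_t)=\mt(p_t i_s)=\delta_{s,t}\,\md(U).
\]
Consequently the modified trace of the $(k,\ell)$ block is $\md(U)$ times $\Tr_{M^{(n)}_{k,\ell}}(c_{k,\ell})$ times the scalars above. The open-strand invariant $\LG_{K^{(2,2j+1)}}$ is the modified trace divided by $\md(V)$, which produces the factor $\A^{(n)}_{k,\ell}$ from \eqref{eq.A}. Summing over $(k,\ell)$ yields the stated formula. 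Sign and parity conventions for the supertrace should be absorbed into $\md$ as in \cite{GHKKW}; this needs care but is routine.

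For the nonvanishing claim, Lemma~\ref{lem.fusion} shows that $(0,n-1)$ has multiplicity one, so $c_{0,n-1}$ is a scalar $\mu$ with $\mu^2=\lambda_{0,n-1}=\th_{V(0,2\a+n-1)}/\th_V^2$. This is a nonzero monomial in $q^{\a}$ and $q$, so $\mu\neq0$. This step is the main obstacle only in that one must confirm the braiding is invertible on the summand, but invertibility of $c$ makes it immediate: $\Tr(c_{0,n-1})=\mu\neq 0$.
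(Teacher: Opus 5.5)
Your proposal is correct and is essentially the paper's proof. You compute the modified trace of $c^{2j+1}\circ F_T$ in two ways. First, via the channeling identity, Lemmas~\ref{lem.FTisot} and~\ref{lem.cisot}, and cyclicity, which gives $\md(U)\,\lambda_{k,\ell}^j\,\Tr(c_{k,\ell})\,\LG_K^{(k+1)}(q^{2\a+\ell},q)$ per isotypic block. Second, as $\mt_V\circ\ptr_V$ with the framing correction $\th_V^{-(2j+1)}$, after which you divide by $\md(V)$. Your nonvanishing argument also matches the paper's: multiplicity one plus invertibility of $c$, or equivalently $\mu^2=\lambda_{0,n-1}\neq 0$.
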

\begin{proof}
    Let $T$ be a $(1,1)$-tangle representative of $K$ and $F_T$ the RT operator for $T$ colored by $V^{\otimes 2}$. Equivalently, we may consider the $(2,0)$-cabling of $T$ and color it by $V$. The $(2,2j+1)$-cabling of $K$ is then presented by $F_T\circ c^{2j+1}$ and its modified trace satisfies
    \begin{align}\label{eq.trtr}
        \mt_{V\otimes V}(F_T\circ c^{2j+1}) = \mt_V(\ptr_V(F_T\circ c^{2j+1})).
    \end{align}
    We proceed by evaluating each side of the equality.

    For the left-hand side, we apply the channeling identity \eqref{eq.channel}, followed by Lemmas \ref{lem.FTisot} and \ref{lem.cisot}, and cyclicity of the modified trace
    \begin{equation*}
        \begin{aligned}
            \mt_{V\otimes V}(F_T\circ c^{2j+1}) &=\sum_{k,\ell\geq0}\mt_{V\otimes V}(F_T\circ c^{2j+1}\circ P_{(k,\ell)})
            \\
            &=
            \sum_{k,\ell\geq0} \LG_K^{(k+1)}(q^{2\a+\ell},q)\, \lambda_{k,\ell}^j \mt_{V\otimes V}(c\circ P_{(k,\ell)})
            \\
            &=
            \sum_{k,\ell\geq0} \LG_K^{(k+1)}(q^{2\a+\ell},q)\, \lambda_{k,\ell}^j \sum_s \mt_{U}(p_s \circ c \circ i_s)
            \\
            &=
            \sum_{k,\ell\geq0} \LG_K^{(k+1)}(q^{2\a+\ell},q)\, \lambda_{k,\ell}^j \sum_s c_{ss} \mt_{U}(\id_U)
            \\
            &=
            \sum_{k,\ell\geq0} \md(V(k,2\a+\ell))\, \LG_K^{(k+1)}(q^{2\a+\ell},q)\, \lambda_{k,\ell}^j \Tr_{M^{(n)}_{k,\ell}}(c_{k,\ell}).
        \end{aligned}
    \end{equation*}
    To resolve the right-hand side of \eqref{eq.trtr}, we adjust the $(1,1)$-tangle diagram associated to $\ptr_V(F_T\circ c^{2j+1})$ so that it has zero writhe by introducing a framing correction. We have
    \[
    \ptr_V(F_T\circ c^{2j+1}) = \th_V^{2j+1}\ptr_V(\th_V^{-(2j+1)}\, F_T\circ c^{2j+1})= \th_V^{2j+1}\,\LG_{K^{(2,2j+1)}}^{(n)}(q^\a,q) \,\id_{V}
    \]
    which when composed with $\mt_V$ gives $\th_V^{2j+1}\,\LG_{K^{(2,2j+1)}}^{(n)}(q^\a,q) \,\md(V)$.

    Combining both sides, and recalling $\A^{(n)}_{k,\ell}$ from \eqref{eq.A}, we now have the desired identity
    \begin{align*}
        \th_V^{2j+1}\,\LG_{K^{(2,2j+1)}}^{(n)}(q^\a,q)\, \md(V) &= \sum_{k,\ell} \md(V(k,2\a+\ell))\, \LG_K^{(k+1)}(q^{2\a+\ell},q)\, \lambda_{k,\ell}^j \Tr_{M^{(n)}_{k,\ell}}(c_{k,\ell})
        \\
        \LG_{K^{(2,2j+1)}}^{(n)}(q^\a,q) &=\th_V^{-(2j+1)} \sum_{k,\ell\geq0} \lambda_{k,\ell}^j \frac{\md(V(k,2\a+\ell))}{\md(V)} \Tr_{M^{(n)}_{k,\ell}}(c_{k,\ell}) \LG_K^{(k+1)}(q^{2\a+\ell},q).
    \end{align*}

    To prove the latter claim, we recall from Lemma \ref{lem.fusion} that $V(0,2\a+n-1)$ has multiplicity one. Since the braiding is invertible, each matrix $c_{k,\ell}$ must be invertible. In particular, $\Tr_{M^{(n)}_{0,n-1}}(c_{0,n-1})=c_{0,n-1}$ is invertible and hence nonzero.
\end{proof}

\subsection{System of twists}
We complete the proof of Theorem \ref{thm}. We begin by proving that the $\lambda_{k,\ell}$ are pairwise distinct.

\begin{lemma}\label{lem.lambda}
    For $(k,\ell)$ appearing in Table \ref{table.fusion}, each of the $3n$ numbers $\lambda_{k,\ell}$ is given by the expression
    \[
    \lambda_{k,\ell}=q^{-4\a^2+4\a(n-1-k-2\ell)-2\ell(k+\ell+1)}
    \] 
    and are pairwise distinct.
\end{lemma}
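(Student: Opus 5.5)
The plan is to compute each $\lambda_{k,\ell}$ directly from the twist formula, then compare exponents. I will use the general formula for the twist scalar on a typical Kac module,
\[
\th_{V(k,\b)}=q^{-2\b(k+1+\b)}.
\]
This specializes to \eqref{eq.mdth} when $(k,\b)=(n-1,\a)$, and I would cite it from the same source \cite{GHKKW}. Setting $\b=2\a+\ell$ gives the exponent
\[
-2(2\a+\ell)(2\a+k+\ell+1)=-8\a^2-4\a(k+1+2\ell)-2\ell(k+\ell+1).
\]
Dividing by $\th_V^2=q^{-4\a^2-4\a n}$, as in the definition of $\lambda_{k,\ell}$ in Lemma \ref{lem.cisot}, yields the exponent $-4\a^2+4\a(n-1-k-2\ell)-2\ell(k+\ell+1)$. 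This is the stated expression.

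For distinctness, I regard each exponent as a polynomial in $\a$. The $\a^2$-coefficient is common to all $(k,\ell)$. Since $\a$ is generic (in particular $\a\notin\frac12\BZ$, and we may view $q^\a$ as an independent variable), two $\lambda$'s agree only if both their $\a$-coefficients $n-1-k-2\ell$ and their constant terms $\ell(k+\ell+1)$ agree. So I will first sort the $3n$ pairs of Table \ref{table.fusion} by the value of $k+2\ell$:
\begin{itemize}
\item $(2r,n-1-r)$ gives $k+2\ell=2n-2$;
\item $(2r,n-r)$ gives $2n$;
\item both $(2r+1,n-1-r)$ and $(2n-1,0)$ give $2n-1$.
\end{itemize}
Pairs in different classes already have distinct $\lambda$'s.

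Within each class I compare $\ell(k+\ell+1)$:
\begin{itemize}
\item In the first class it equals $(n-1-r)(n+r)=n^2-n-r(r+1)$, which is strictly decreasing in $r\geq0$.
\item In the second class it equals $(n-r)(n+r+1)=n^2+n-r(r+1)$, also strictly decreasing.
\item In the third class it equals $(n-1-r)(n+1+r)=n^2-(r+1)^2$ for $0\leq r\leq n-2$. These values are distinct and all positive, since $r+1\leq n-1<n$. The extra pair $(2n-1,0)$ contributes $0$, which is distinct from all of them.
\end{itemize}
Hence all $3n$ values are pairwise distinct.

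Note that the multiplicity-two components correspond to a single pair $(k,\ell)$. The count is therefore $n+n+(n-1)+1=3n$, consistent with Lemma \ref{lem.fusion}. The only step I regard as a genuine obstacle is pinning down the general twist formula $\th_{V(k,\b)}$ with the correct normalization, in particular the shift $k+1$. Everything else is elementary bookkeeping.
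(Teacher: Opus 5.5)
Your proposal is correct and follows essentially the same route as the paper. The paper also evaluates $\lambda_{k,\ell}$ from the twist formula of \eqref{eq.mdth}, implicitly in the general form $q^{-2\beta(k+1+\beta)}$ applied to each summand. It then separates the $3n$ pairs by $k+2\ell$ and, within each class, by $\ell(k+\ell+1)$, observing as you do that the values in the third row are positive while the last row gives $0$.
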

\begin{proof}
    Evaluating $\lambda_{k,\ell}$, defined in Lemma \ref{lem.cisot}, with $\th_V$ as in \eqref{eq.mdth} gives the prescribed value.
    We check the coefficient of $\a$ which is determined by $k+2\ell$. In Table \ref{table.fusion}, the value $k+2\ell$ in each row is $2n-2$, $2n$, $2n-1$, and again $2n-1$. Within each row, or the last pair of rows, we consider the constant term in the exponent of $q$, determined by $\ell(k+\ell+1)$. We verify that these are row-wise distinct for each pair $(k,\ell)$.
    \[
    \begin{array}{ccccc}
        k & \ell & \text{range} & k+2\ell& \ell(k+\ell+1)\\
        \hline
        2r   & n-1-r & 0\leq r\leq n-1 & 2n-2& n(n-1)-r(r+1)\\
        2r   & n-r   & 0\leq r\leq n-1 & 2n& n(n+1)-r(r+1) \\
        2r+1 & n-1-r & 0\leq r\leq n-2 & 2n-1& n^2-(r+1)^2 \\
        2n-1 & 0     & - &2n-1& 0
        \end{array}
    \]
    Note that $\ell(k+\ell+1)>0$ in the third row and is distinct from the value $0$ in the fourth row.
\end{proof}

We now have the machinery prepared to complete the proof of the main theorem.
\begin{proof}[Proof of Theorem \ref{thm} for $n\geq 2$]
    Suppose for the sake of a contradiction that 
    \[
    \LG^{(n)}_{L^{(2,2j+1)}}(q^\a,q)=\LG^{(n)}_{M^{(2,2j+1)}}(q^\a,q)
    \]
    for all integers $0\leq j \leq 3n-1$. For each $j$, applying Proposition \ref{prop.cable} and taking the difference of these polynomials implies
    \[
    0=\th_V^{-(2j+1)}\sum_{k,\ell\geq0} \lambda_{k,\ell}^j\A_{k,\ell}^{(n)}\Tr_{M^{(n)}_{k,\ell}}(c_{k,\ell})\left(\LG_{L}^{(k+1)}(q^{2\a+\ell},q)-\LG_{M}^{(k+1)}(q^{2\a+\ell},q)\right).
    \]
    Normalize out the ribbon factor and set 
    \[D_{k,\ell}=\A_{k,\ell}^{(n)}\Tr_{M^{(n)}_{k,\ell}}(c_{k,\ell})\left(\LG_{L}^{(k+1)}(q^{2\a+\ell},q)-\LG_{M}^{(k+1)}(q^{2\a+\ell},q)\right)\]
    so that over all $j$ we have a $3n\times 3n$ system
    \[
    \sum_{k,\ell\geq 0 } D_{k,\ell}\lambda^j_{k,\ell}=0.
    \]
    Choose an ordering of the isotypic components $V(k,2\a+\ell)=V_i$ and assign the same ordering to $\lambda_{k,\ell}=\lambda_i$ and $D_{k,\ell}=D_i$. This realizes a Vandermonde system
    \[
\begin{pmatrix}
1 & 1 & \cdots & 1 \\
\lambda_1 & \lambda_2 & \cdots & \lambda_{3n} \\
\vdots & \vdots & \ddots & \vdots \\
\lambda_1^{3n-1} & \lambda_2^{3n-1} & \cdots & \lambda_{3n}^{3n-1}
\end{pmatrix}
\begin{pmatrix}
D_1 \\
D_2 \\
\vdots \\
D_{3n}
\end{pmatrix}=0.
\]
The determinant of the matrix of $(\lambda_i^j)_{ij}$ is $\prod_{i<j}(\lambda_j-\lambda_i)$, which is nonzero by Lemma \ref{lem.lambda}. Hence, $D_i=0$ for all $i$.

Now consider $(k,\ell)=(0,n-1)$. Recall that $\A^{(n)}_{k,\ell}$ is nonzero and from Proposition \ref{prop.cable} that $\Tr_{M^{(n)}_{k,\ell}}(c_{k,\ell})$ is nonzero as well. 
Finally, the Links--Gould polynomial in $(q^\a,q)$ distinguishes $L$ from $M$.
Hence, for $(k,\ell)=(0,n-1)$,
\[
D_{k,\ell}=\A_{k,\ell}^{(n)}\Tr_{M^{(n)}_{k,\ell}}(c_{k,\ell})\left(\LG_{L}^{(k+1)}(q^{2\a+\ell},q)-\LG_{M}^{(k+1)}(q^{2\a+\ell},q)\right)\neq 0
\]
and we have reached a contradiction. There must be some $0\leq j\leq 3n-1$ such that $\LG^{(n)}_{L^{(2,2j+1)}}(q^\a,q)\neq\LG^{(n)}_{M^{(2,2j+1)}}(q^\a,q)$. Moreover, $\CJ(L^{(2,2j+1)})=\CJ(M^{(2,2j+1)})$ by Lemma \ref{lem.CJ}.
\end{proof}

\section{Proof of Corollary \texorpdfstring{\ref{cor2}}{\ref*{cor2}}}
\label{sec.cor2}
To prove the corollary we clarify our conventions for the colored Jones and Links--Gould polynomials. We assume that $J^{(n)}_{\mathsf{0_1}}(q)=1$ and $\LG^{(n)}_{\mathsf{0_1}}(q^\a,q)=1$ for all $n\geq 1$. These assumptions match those of \cite{GHKKW}. 

With these conventions these polynomials are multiplicative under connected sums
\[
J^{(n)}_{K\#K'}=J^{(n)}_{K}J^{(n)}_{K'} \qquad \mbox{and}
\qquad
\LG^{(n)}_{K\#K'}=\LG^{(n)}_K\LG^{(n)}_{K'}.
\]
Indeed, these invariants are computed on knots by computing the scalar associated to a $(1,1)$-tangle representative under the Reshetikhin--Turaev functor. Connected sums are represented by the stacking of $(1,1)$-tangles computing these invariants and hence by multiplication of scalars. 

\begin{proof}[Proof of Corollary \ref{cor2}]
    Fix $n\geq 1$. Let $A$ and $B$ be a pair of knots witnessing Theorem \ref{thm} so that $\CJ(A)=\CJ(B)$ and $\LG^{(n)}_A(q^\a,q)\neq\LG^{(n)}_{B}(q^\a,q)$. Let $K_m=A^{\# m-1}\#B^{\# N-m}$ for $1\leq m\leq N$. Multiplicativity of the Jones polynomial implies $\CJ(K_1)=\dots =\CJ(K_N)$.

    Similarly, multiplicativity of Links--Gould polynomials under connected sums gives
    \[
    LG^{(n)}_{K_m}(q^\a,q)=(\LG^{(n)}_{A}(q^\a,q))^{m-1}\cdot (\LG^{(n)}_{B}(q^\a,q))^{N-m}
    \] 
    Now \cite[Thm.1.3]{GHKKW} states $\LG^{(n)}_K(1,q)=1$ for every knot $K$. In particular, $\LG^{(n)}_A(q^\a,q)$ and $\LG^{(n)}_B(q^\a,q)$ are nonzero. 
    
    Suppose for contradiction that $LG^{(n)}_{K_s}=LG^{(n)}_{K_t}$ for some $s\neq t$. Let $x=\LG^{(n)}_{A}\cdot (\LG^{(n)}_{B})^{-1}$. By our assumption 
    \[
    \frac{(\LG^{(n)}_{A}(q^\a,q))^{s-1}\cdot (\LG^{(n)}_{B}(q^\a,q))^{N-s}}{(\LG^{(n)}_{A}(q^\a,q))^{t-1}\cdot (\LG^{(n)}_{B}(q^\a,q))^{N-t}} = x^{s-t}=1.
    \]
    This implies $x$ is a root of unity in $\BQ(q^\a,q)$ and hence $x=\pm 1$. If $x=1$, then this contradicts $\LG^{(n)}_A(q^\a,q)\neq\LG^{(n)}_B(q^\a,q)$. If $x=-1$, then this violates the specialization property $\LG^{(n)}_K(1,q)=1$. Therefore the colored Links--Gould polynomials of $K_1,\dots, K_N$ are pairwise distinct.
\end{proof}

\section*{Tool Use}No AI tools have been used to write or create this work.

\bibliographystyle{hamsalpha}
\bibliography{biblio}
\end{document}